\newtheorem{theorem}{Theorem}[section]
\newtheorem{qst}[theorem]{Question}
\newtheorem{thm}[theorem]{Theorem}
\newtheorem{prop}[theorem]{Proposition}
\newtheorem{fact}[theorem]{Fact}
\newtheorem{cor}[theorem]{Corollary}
\theoremstyle{definition}
\theoremstyle{remark}
\newcommand{\az}{\alpha \Z}
\newcommand{\fa}{f|_{[0,\alpha]}}
\ProvideTextCommandDefault{\cprime}{(U+042C)}
\newcommand{\nip}{\mathrm{NIP}}
\newcommand{\Cal}[1]{\ensuremath{\mathcal{#1}}}
\newcommand{\Sa}[1]{\ensuremath{\mathscr{#1}}}
\newcommand{\Z}{\mathbb{Z}}
\newcommand{\N}{\mathbb{N}}
\newcommand{\Q}{\mathbb{Q}}
\newcommand{\R}{\mathbb{R}}
\begin{document}
\title[]{Generalizing a theorem of B\`{e}s and Choffrut}

\author{Erik Walsberg}
\address{Department of Mathematics, Statistics, and Computer Science\\
Department of Mathematics\\University of California, Irvine, 340 Rowland Hall (Bldg.\# 400),
Irvine, CA 92697-3875}
\email{ewalsber@uci.edu}
\urladdr{http://www.math.illinois.edu/\textasciitilde erikw}

\date{\today}

\maketitle

\begin{abstract}
B\`{e}s and Choffrut recently showed that there are no intermediate structures between $(\R,<,+)$ and $(\R,<,+,\Z)$.
We prove a generalization: if $\Sa R$ is an o-minimal expansion of $(\R,<,+)$ by bounded subsets of Euclidean space then there are no intermediate structures between $\Sa R$ and $(\Sa R,\Z)$.
It follows there are no intermediate structures between $(\R,<,+,\sin|_{[0,2\pi]})$ and $(\R,<,+,\sin)$.
\end{abstract}

\section{Introduction}
\noindent
Throughout all structures are first order and ``definable" means ``first-order definable, possibly with parameters".
Suppose $\Sa M$, $\Sa N$, and $\Sa O$ are structures on a common domain $M$.
Then $\Sa M$ is a reduct of $\Sa O$ if every $\Sa M$-definable subset of every $M^n$ is $\Sa O$-definable, $\Sa M$ and $\Sa O$ are interdefinable if each is a reduct of the other, $\Sa M$ is a proper reduct of $\Sa O$ if $\Sa M$ is a reduct of $\Sa O$ and $\Sa M$ is not interdefinable with $\Sa O$, and $\Sa N$ is intermediate between $\Sa M$ and $\Sa O$ if $\Sa M$ is a proper reduct of $\Sa N$ and $\Sa N$ is a proper reduct of $\Sa O$.
If $A$ is a subset of $M^m$ then the structure induced on $A$ by $\Sa M$ is the structure on $A$ with an $n$-ary predicate defining $A^n \cap X$ for every $\Sa M$-definable $X \subseteq M^{mn}$.
\newline

\noindent
Throughout $\Sa R$ and $\Sa S$ are structures expanding $(\R,<,+)$.
Recall that $\Sa R$ is o-minimal if every definable subset of $\R$ is a finite union of open intervals and singletons and $\Sa R$ is locally o-minimal if one of the following equivalent conditions holds.
\begin{enumerate}
\item If $X \subseteq \R$ is definable then for any $a \in \R$ there is an open interval $I$ containing $a$ such that $I \cap X$ is a finite union of open intervals and singletons.
\item If $I$ is a bounded interval then the structure induced on $I$ by $\Sa R$ is o-minimal.
\end{enumerate}
It is clear that $(2)$ implies $(1)$, the other direction follows by compactness of closed bounded intervals.
It follows from independent work of Miller~\cite{ivp} or Weispfenning~\cite{weis} that $(\R,<,+,\Z)$ is locally o-minimal, another example of a locally o-minimal non-o-minimal structure is $(\R,<,+,\sin)$, see \cite{TV-local}.
\newline

\noindent
B\`{e}s and Choffrut~\cite{bes-choffrut} show that there is no structure intermediate between $(\R,<,+)$ and $(\R,<,+,\Z)$.
We prove the following.

\begin{thm}
\label{thm:main}
Suppose $\Sa R$ is o-minimal and $\alpha >0$.
If $(\Sa R,\az)$ is locally o-minimal then there is no structure intermediate between $\Sa R$ and $(\Sa R, \az)$.
Furthermore $(\Sa R,\az)$ is locally o-minimal if and only if $\Sa R$ has no poles and rational global scalars.
\end{thm}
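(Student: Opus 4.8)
The plan is to prove both biconditionals by isolating the easy ``destructive'' implications from the one hard structural result, a quantifier reduction for $(\Sa R,\az)$, and then to read off the non-existence of intermediate structures from that reduction. I first treat the necessity of the characterization. Suppose $\Sa R$ has a pole, i.e.\ an $\Sa R$-definable $f\colon(a,b)\to\R$ with $b<\infty$ and $\lim_{x\to b^-}f(x)=+\infty$. By the monotonicity theorem I may shrink $(a,b)$ so that $f$ is continuous and strictly increasing; then $f^{-1}(\az)$ is $(\Sa R,\az)$-definable and meets every left neighbourhood of $b$ in an infinite discrete set, so its trace on the bounded interval $(a,b)$ is not a finite union of intervals and points and $(\Sa R,\az)$ is not locally o-minimal. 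Likewise, if some irrational $\lambda$ is a global scalar, then $x\mapsto\lambda x$ is $\Sa R$-definable, so $\az+\lambda\az=\al(\Z+\lambda\Z)$ is $(\Sa R,\az)$-definable; since $\lambda\notin\Q$ this set is dense and codense in $\R$, hence its trace on any bounded interval is infinite with empty interior, and local o-minimality fails again. Thus local o-minimality forces no poles and rational global scalars.

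For the converse I would prove a structure theorem: under no poles and rational global scalars, every $(\Sa R,\az)$-definable subset of each $\R^n$ is a finite Boolean combination of $\Sa R$-definable sets and sets $g^{-1}(\az)$ with $g$ an $\Sa R$-definable partial function. This goes by induction on formulas, the only real content being elimination of one existential quantifier over $\R$ in the presence of $\az$; rational global scalars are precisely what force the finitely many ``slopes'' produced there to lie in $\Q$, so that the relevant combinations of elements of $\az$ stay inside a single dilate $\tfrac1N\az$ and no dense subgroup of the previous paragraph can arise. Granting this, local o-minimality is immediate. Fix a bounded interval $I$. Each $\Sa R$-definable set traces to a finite union of intervals and points by o-minimality of $\Sa R$; and since $\Sa R$ has no poles, any $g$ is bounded on $I$, so $g(I)\cap\az$ is a finite set $\{v_1,\dots,v_k\}$ and $g^{-1}(\az)\cap I=\bigcup_j\bigl(g^{-1}(v_j)\cap I\bigr)$ is again a finite union of intervals and points. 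Finite Boolean combinations preserve this class, so the structure induced on $I$ is o-minimal and $(\Sa R,\az)$ is locally o-minimal.

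Finally assume $(\Sa R,\az)$ is locally o-minimal and that $\Sa N$ is intermediate. Because parameters are allowed, every finite union of intervals and points is $\Sa R$-definable; as $\Sa N$ properly expands $\Sa R$ it defines some $X\sub\R$ that is therefore not a finite union of intervals and points, yet is locally such, so $\bd X$ is an infinite, closed, discrete, $\Sa N$-definable set. It remains to show that any such set lets $\Sa N$ recover $\az$: applying the structure theorem to $D=\bd X$ and inverting the $\Sa R$-definable data that build $D$ from $\az$, one produces an $\Sa R$-definable (hence $\Sa N$-definable) $g$ with $g(D)$ equal to $\az$ up to a finite set, so $\az$ is $\Sa N$-definable; then $(\Sa R,\az)$ is a reduct of $\Sa N$, forcing $\Sa N=(\Sa R,\az)$ and contradicting that $\Sa N$ is a proper reduct of $(\Sa R,\az)$. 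I expect the main obstacle to be the structure theorem together with this last recovery step: pinning down how $\az$ survives quantifier elimination (so that definable discrete sets are exactly the $\Sa R$-definable pullbacks of $\az$) and checking that such a pullback can always be $\Sa R$-definably pushed back onto $\az$ is where the rational-scalars and no-poles hypotheses do the real work.
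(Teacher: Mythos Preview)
Your necessity argument (poles and irrational scalars destroy local o-minimality) matches the paper's. After that the approaches diverge, and your plan has one genuine gap and one underestimated difficulty.

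\textbf{The gap: the o-minimal case for $\Sa N$.} You write ``as $\Sa N$ properly expands $\Sa R$ it defines some $X\sub\R$ that is therefore not a finite union of intervals and points.'' This inference is invalid: a proper expansion can be witnessed purely in higher arities while remaining o-minimal (e.g.\ $(\R,<,+,\cdot)$ over $(\R,<,+)$). So you must separately rule out an o-minimal $\Sa N$ strictly between $\Sa R$ and $(\Sa R,\az)$. The paper does this via Edmundo's structure theorem: any o-minimal $\Sa N$ with no poles and rational global scalars is interdefinable with $(\R,<,+,\Cal B)$ for some collection $\Cal B$ of \emph{bounded} sets; since every bounded $(\Sa R,\az)$-definable set is already $\Sa R$-definable (this drops out of the structure theorem), $\Sa N$ collapses to $\Sa R$. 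Your outline never touches this case.

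\textbf{The underestimated step: recovering $\az$ from $D$.} Your ``inverting the $\Sa R$-definable data'' is precisely the content of B\`es--Choffrut in the base case $\Sa R=(\R,<,+)$, and it is not soft: a unary $(\R,<,+,\Z)$-definable set can involve arbitrary $(\Z,<,+)$-definable $Y\sub\Z$ (congruence conditions, half-lines, finite unions thereof), and showing that any such infinite discrete $D$ recovers $\Z$ is exactly their theorem. The paper does not redo this; instead it proves that unary $(\Sa R,\az)$-definable sets are already $(\R,<,+,\az)$-definable (an $\Sa R$-minimality corollary of the structure theorem), and then \emph{quotes} B\`es--Choffrut as a black box. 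Your plan, by contrast, commits you to reproving their result inside a more general $\Sa R$, and your sketch gives no mechanism for it.

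\textbf{On the structure theorem itself.} Rather than a hand-rolled quantifier elimination, the paper imports two results: Edmundo's theorem (no poles $+$ rational global scalars $\Leftrightarrow$ interdefinable with $(\R,<,+,\Cal B)$ for bounded $\Cal B$) and the Kawakami--Takeuchi--Tanaka--Tsuboi description of locally o-minimal expansions of $(\R,<,+)$ defining $\Z$ (every definable set is a finite union of sets $\bigcup_{b\in Y} b+X$ with $X\sub[0,1)^n$ $\Sa R$-definable and $Y\sub\Z^n$ $(\Z,<,+)$-definable). This both yields local o-minimality and, crucially, the $(\R,<,+,\Z)$-minimality that feeds into B\`es--Choffrut. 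Your Boolean-combination normal form is plausible but would need real work at the existential-quantifier step, and in any case does not by itself supply either the o-minimal-$\Sa N$ argument or the recovery of $\az$.
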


\noindent 
As $(\R,<,+,\sin|_{[0,2\pi]})$ is o-minimal and $(\R,<,+,\sin|_{[0,2\pi]}, 2\pi\Z)$ is interdefinable with $(\R,<,+,\sin)$ there is no intermediate structure between $(\R,<,+,\sin|_{[0,2\pi]})$ and $(\R,<,+,\sin)$.
A pole is a definable surjection $\gamma : I \to J$ where $I$ is a bounded interval and $J$ is an unbounded interval and $\Sa R$ has rational global scalars if the function $\R \to \R$, $t \mapsto \lambda t$ is only definable when $\lambda \in \Q$.
\newline

\noindent
The second claim of Theorem~\ref{thm:main} is essentially proven in \cite{big-nip}.
Indeed, much of the proof of Theorem~\ref{thm:main} essentially appears in \cite{big-nip}.
As that paper is rather long and involves many abstract model-theoretic concepts, and Theorem~\ref{thm:main} is of independent interest, it seems worthwhile to record a separate proof.
For those interested in $\nip$ we note that the following are shown to be equivalent in \cite{big-nip}:
\begin{enumerate}
\item The primitive relations of $\Sa S$ are boolean combinations of closed sets, the primitive functions of $\Sa S$ are continuous, and $\Sa S$ is strongly dependent,
\item $\Sa S$ is either o-minimal or interdefinable with $(\Sa R,\az)$ for some real number $\alpha > 0$ and o-minimal $\Sa R$ with no poles and rational global scalars.
\end{enumerate}

\noindent
In contrast arbitrary strongly dependent expansions of $(\R,<,+)$ are as complicated as arbitrary strongly dependent structures of cardinality continuum by~\cite{HNW}.
\newline

\noindent
The theorem of B\`{e}s and Choffrut is analogous to the theorem of Conant~\cite{conant} that there are no intermediate structures between $(\Z,+)$ and $(\Z,<,+)$.
Conant's proof relies on a detailed analysis of $(\Z,<,+)$-definable subsets of $\Z^n$.
Alouf and d'Elb\'{e}e~\cite{AldE} give a quicker proof.
They use deep model-theoretic machinery to reduce to the unary case and thereby avoid geometric complexity.
They apply the earlier theorem of Conant and Pillay~\cite{CoPi} that there are no proper stable dp-minimal expansions of $(\Z,+)$, which itself depends on work of Palac\'{i}n and Sklinos~\cite{PS-superstable}, who apply the Buechler dichotomy theorem, a high-level result of stability theory.
B\`{e}s and Choffrut's proof involves a detailed geometric analysis of $(\R,<,+,\Z)$-definable subsets of $\R^n$.
Our proof relies on an important o-minimal structure theorem of Edmundo~\cite{ed-str} and Kawakami, Takeuchi, Tanaka, and Tsuboi's~\cite{KTTT} work on locally o-minimal structures.
These theorems reduce the first claim of Theorem~\ref{thm:main} to the unary case and also show that the unary case follows from the work of B\`{e}s and Choffrut.
\newline

\noindent
In Section~\ref{section:survey} we describe what we know about $(\Sa R,\Z)$ when $\Sa R$ is o-minimal.

\subsection*{Conventions}
$m,n$ are natural numbers and $\alpha,\beta,\lambda,s,t,r$ are real numbers.

\section{Proof of Theorem~\ref{thm:main}}
\label{section:short}
\noindent 
Fact~\ref{fact:ed} is a special case of a theorem of Edmundo~\cite{ed-str}.

\begin{fact}
\label{fact:ed}
Suppose $\Sa S$ is o-minimal.
Then the following are equivalent
\begin{enumerate}
\item $\Sa S$ has no poles,
\item $\Sa S$ does not define $\oplus,\otimes : \R^2 \to \R$ such that $(\R,<,\oplus,\otimes)$ is isomorphic to $(\R,<,+,\cdot)$,
\item there is a collection $\Cal B$ of bounded subsets of Euclidean space and a subfield $K$ of $(\R,+,\cdot)$ such that $\Sa S$ is interdefinable with $(\R,<,+,\Cal B, (t \mapsto \lambda t)_{\lambda \in K})$.
\end{enumerate}
Also $\Sa S$ has no poles and rational global scalars if and only if there is a collection $\Cal B$ of bounded subsets of Euclidean space such that $\Sa S$ is interdefinable with $(\R,<,+,\Cal B)$.
\end{fact}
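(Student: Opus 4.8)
The entire statement is a repackaging of Edmundo's structure theory for o-minimal expansions of ordered groups~\cite{ed-str}, so the plan is to quote the relevant theorems and assemble the equivalences rather than to prove anything from scratch. I would organize the argument as $(1)\Leftrightarrow(2)$, then $(1)\Rightarrow(3)\Rightarrow(1)$, and finally the supplementary claim about rational global scalars. Throughout, the operative dichotomy is that the conditions $(1)$ and $(2)$ both express that $\Sa S$ is \emph{semibounded}.

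First I would treat $(1)\Leftrightarrow(2)$. If $\Sa S$ defines $\oplus,\otimes$ making $(\R,<,\oplus,\otimes)$ a real closed field, then $\otimes$-inversion is a definable surjection from a bounded interval onto an unbounded one (small field elements have large field inverses, and the field order agrees with $<$), i.e.\ a pole; this is the easy direction $\lnot(2)\Rightarrow\lnot(1)$. For the converse I would invoke the Peterzil--Starchenko trichotomy together with Edmundo's theorem: an o-minimal expansion of $(\R,<,+)$ with a pole defines multiplication on some interval and, using the pole to transport that multiplication across all of $\R$, defines a global real closed field, giving $\lnot(1)\Rightarrow\lnot(2)$.

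Next, $(1)\Rightarrow(3)$ is exactly Edmundo's structure theorem for semibounded structures: letting $K$ be the field of those $\lambda$ for which the global scalar $t\mapsto\lambda t$ is $\Sa S$-definable, every definable set decomposes as a finite union of $K$-cones over bounded definable sets, whence $\Sa S$ is interdefinable with $(\R,<,+,\Cal B,(t\mapsto\lambda t)_{\lambda\in K})$ where $\Cal B$ is the family of bounded definable sets. For $(3)\Rightarrow(1)$ I would observe that the displayed structure is generated over $(\R,<,+)$ by bounded sets and by the group endomorphisms $t\mapsto\lambda t$; by Edmundo's theorem such a structure is semibounded, and semiboundedness rules out a pole, since the cone decomposition forces every definable subset of $\R^2$ to be eventually linear, which is incompatible with a surjection from a bounded onto an unbounded interval.

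Finally I would deduce the supplementary claim. If $\Sa S$ has no poles and rational global scalars then $(3)$ applies with $K=\Q$, and since every rational scalar $t\mapsto (p/q)t$ is already definable in $(\R,<,+)$ — via iterated addition together with the definable inverse of $t\mapsto qt$ — the scalars can be dropped, giving interdefinability with $(\R,<,+,\Cal B)$. Conversely, if $\Sa S$ is interdefinable with $(\R,<,+,\Cal B)$ for bounded $\Cal B$, then it is already in the form of $(3)$, so $(3)\Rightarrow(1)$ gives no poles; that its field of global scalars is exactly $\Q$ is the delicate point, and here I would appeal to Edmundo's identification of $K$ as the field of definable global scalars. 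One must rule out, for instance, that a bounded segment of $\{(t,\sqrt2\,t)\}$ makes $t\mapsto\sqrt2\,t$ globally definable, and the cone decomposition shows it cannot, since assembling the unbounded graph from a bounded segment would require unboundedly many translations, impossible in an o-minimal structure. This last point is where the real content lies; the remainder is bookkeeping on top of~\cite{ed-str}.
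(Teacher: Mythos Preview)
The paper does not prove this statement at all: it is labelled a \emph{Fact}, introduced only by the sentence ``Fact~\ref{fact:ed} is a special case of a theorem of Edmundo~\cite{ed-str},'' and then used as a black box. Your proposal is consistent with this---you too defer to Edmundo for every substantive step---so there is no divergence of approach to discuss; you have simply unpacked the citation into a sketch of how the equivalences are assembled from \cite{ed-str} (and Peterzil--Starchenko for $(1)\Leftrightarrow(2)$), which is reasonable and correct in outline.

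One remark on content: your identification of the delicate point is accurate. The implication that interdefinability with $(\R,<,+,\Cal B)$ forces rational global scalars is exactly where Edmundo's cone decomposition does real work, since a bounded segment of an irrational line does make the irrational scalar \emph{locally} definable, and one must argue that no single formula patches the local pieces into a global map. Your explanation via ``unboundedly many translations'' is the right intuition; in a write-up you would want to cite the precise statement in \cite{ed-str} (or the treatment in Peterzil's work on semibounded structures) rather than leave it at that.
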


\noindent
Let $\tilde{+}$ be the function $[0,1)^2 \to [0,1)$ where $s \tilde{+} t = s + t$ if $s + t < 1$ and $s \tilde{+} t = s + t - 1$ otherwise.
We will need two theorems of Kawakami, Takeuchi, Tanaka, and Tsuboi~\cite{KTTT} which in particular show that any locally o-minimal expansion of $(\R,<,+)$ which defines $\Z$ is bi-interpretable with the disjoint union of an o-minimal expansion of $([0,1),<,\tilde{+})$ and an arbitrary expansion of $(\Z,<,+)$.
A special case is that $(\R,<,+,\Z)$ is bi-interpretable with the disjoint union of $([0,1),<,\tilde{+})$ and $(\Z,<,+)$.
Fact~\ref{fact:kttt-1} is a special case of \cite[Theorem 18]{KTTT}.

\begin{fact}
\label{fact:kttt-1}
Suppose $\Sa I$ is an o-minimal expansion of $([0,1),<,\tilde{+})$ and $\Sa Z$ is an expansion of $(\Z,<,+)$.
Then there is an expansion $\Sa S$ of $(\R,<,+)$ such that a subset of $\R^n$ is $\Sa S$-definable if and only if it is a finite union of sets of the form $\bigcup_{b \in Y} b + X$ for $\Sa I$-definable $X \subseteq [0,1)^n$ and $\Sa Z$-definable $Y \subseteq \Z^n$.
This $\Sa S$ is locally o-minimal.
\end{fact}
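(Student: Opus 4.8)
The plan is to realize $\R$ as the product $\Z \times [0,1)$ via the bijection $r \mapsto (\lfloor r\rfloor,\, r - \lfloor r\rfloor)$, under which a set of the form $\bigcup_{b\in Y} b + X$ with $Y \subseteq \Z^n$ and $X \subseteq [0,1)^n$ corresponds exactly to the product $Y \times X \subseteq \Z^n \times [0,1)^n$; indeed a point $b + x$ with $b \in \Z^n$ and $x \in [0,1)^n$ has integer part $b$ and fractional part $x$. Let $\Cal{D}_n$ be the collection of finite unions of such products with $Y$ ranging over $\Sa Z$-definable sets and $X$ over $\Sa I$-definable sets, and put $\Cal{D} = (\Cal{D}_n)_n$. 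I would show that $\Cal{D}$ satisfies the standard closure conditions guaranteeing it is the collection of definable sets of a structure $\Sa S$ on $\R$, that $\Sa S$ expands $(\R,<,+)$, and finally that $\Sa S$ is locally o-minimal.

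First I would verify the closure conditions. Since $\Sa Z$ and $\Sa I$ are structures, each is closed under Boolean operations, projections, products, and coordinate permutations. For $\Cal{D}$: intersection is handled by $(Y_1 \times X_1) \cap (Y_2 \times X_2) = (Y_1 \cap Y_2) \times (X_1 \cap X_2)$; complementation by $\R^n \setminus (Y\times X) = (Y^c \times [0,1)^n) \cup (Y \times X^c)$, a union of two products; and finite unions are closed by definition, so each $\Cal{D}_n$ is a Boolean algebra. Adding a dummy coordinate sends $Y \times X$ to $(Y \times \Z)\times (X \times [0,1))$ after reindexing; the diagonal $\{r_i = r_j\}$ is $\{b_i = b_j\}\times\{x_i = x_j\}$; and a coordinate projection $\R^n \to \R^{n-1}$ corresponds under the identification to the product of the projections $\Z^n \to \Z^{n-1}$ and $[0,1)^n \to [0,1)^{n-1}$, so it sends $Y \times X$ to $\pi(Y)\times\pi(X)$. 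All of these stay in $\Cal{D}$ after distributing over finite unions, and since $\Cal{D}$ is closed under the operations it is precisely the collection of sets definable in the structure $\Sa S$ whose basic relations are its members.

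Next I would check that $\Sa S$ expands $(\R,<,+)$, which is where the defining choice of $\tilde{+}$ pays off and which I expect to be the crux. Writing $r_i = b_i + x_i$, the order is captured by $\{r_1 < r_2\} = (\{b_1 < b_2\}\times[0,1)^2)\cup(\{b_1 = b_2\}\times\{x_1 < x_2\})$, a union of two products. Addition is the subtle case: $r_1 + r_2 = r_3$ holds precisely when $x_1 \tilde{+} x_2 = x_3$ together with the \emph{carried} integer equation $b_3 = b_1 + b_2 + c$, where $c \in \{0,1\}$ equals $1$ exactly when $x_1 + x_2 \geq 1$. Splitting on the two values of the carry expresses the graph of $+$ as the union of $\{b_3 = b_1+b_2\}\times\{x_1 + x_2 < 1,\ x_3 = x_1\tilde{+}x_2\}$ and $\{b_3 = b_1 + b_2 + 1\}\times\{x_1 + x_2 \geq 1,\ x_3 = x_1 \tilde{+} x_2\}$, each a product of a $\Sa Z$-definable and an $\Sa I$-definable set. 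This coupling of integer and fractional parts through the carry is the only genuine obstacle: a naive product structure fails, and it is exactly the wrap-around addition $\tilde{+}$ that lets the fractional constraint be expressed inside $\Sa I$.

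Finally, local o-minimality. A definable $D \subseteq \R$ is a finite union of integer translates $b + X$ with $X \subseteq [0,1)$ being $\Sa I$-definable, hence, by o-minimality of $\Sa I$, a finite union of points and intervals. Fix $a \in \R$. If $a$ is not an integer, a small interval about $a$ lies in the single block $(\lfloor a\rfloor, \lfloor a\rfloor + 1)$ and meets $D$ in a translate of $X$ near the interior point $a - \lfloor a\rfloor$, a finite union of points and intervals. If $a$ is an integer, a small interval about $a$ meets only the two adjacent blocks together with the point $a$, so its intersection with $D$ is controlled by the behaviour of $X$ near the endpoints $0$ and $1$ of $[0,1)$, again a finite union of points and intervals. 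Hence $\Sa S$ is locally o-minimal, completing the proof.
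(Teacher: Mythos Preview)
Your argument is correct. The paper, however, does not give its own proof of this statement: it is recorded as a \emph{Fact}, cited as a special case of \cite[Theorem~18]{KTTT}, with only the one-line remark afterward that local o-minimality of $\Sa S$ is immediate from the description of the definable sets. So there is no proof in the paper to compare against beyond that remark, which your final paragraph reproduces.

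For what it is worth, your route is the expected one and is essentially what one extracts from \cite{KTTT}: identify $\R$ with $\Z\times[0,1)$ via floor and fractional part, check that the family of finite unions of rectangles $Y\times X$ is a Boolean algebra closed under products, permutations, diagonals, and projections (projection of a rectangle being the rectangle of projections), and then verify that the graphs of $<$ and $+$ lie in this family. Your carry analysis for $+$ is the right crux; the one step you leave implicit is that the condition ``$x_1+x_2<1$'' is definable in $([0,1),<,\tilde{+})$, but this is routine since $x_1+x_2<1$ holds exactly when $x_1\,\tilde{+}\,x_2\geq x_1$. The local o-minimality check is exactly the paper's parenthetical observation, spelled out.
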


\noindent
Note that local o-minimality of $\Sa S$ follows directly from the description of $\Sa S$-definable sets.
Fact~\ref{fact:kttt-2} is \cite[Theorem 24]{KTTT}.

\begin{fact}
\label{fact:kttt-2}
Suppose $\Sa S$ is locally o-minimal and defines $\Z$.
Then every $\Sa S$-definable subset of every $\R^n$ is a finite union of sets of the form $\bigcup_{b \in Y} b + X$ where $X \subseteq [0,1)^n$ and $Y \subseteq \Z^n$ are $\Sa S$-definable.
\end{fact}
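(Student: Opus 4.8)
The plan is to transport the problem to the product $[0,1)^n\times\Z^n$ and prove a rectangularity statement there. Since $\Sa S$ expands $(\R,<,+)$ and defines $\Z$, the floor map $\lfloor\cdot\rfloor\colon\R\to\Z$ and the fractional part map $\{\cdot\}\colon\R\to[0,1)$ are $\Sa S$-definable, so $r\mapsto(\{r\},\lfloor r\rfloor)$ is a definable bijection $\R\to[0,1)\times\Z$. Applying it in each coordinate identifies an $\Sa S$-definable $A\subseteq\R^n$ with the $\Sa S$-definable set $\tilde A=\{(x,b)\in[0,1)^n\times\Z^n:x+b\in A\}$, and a set $\bigcup_{b\in Y}b+X$ with the rectangle $X\times Y$. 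Thus it suffices to show every $\Sa S$-definable $\tilde A\subseteq[0,1)^n\times\Z^n$ is a finite union of rectangles $X\times Y$ with $X\subseteq[0,1)^n$, $Y\subseteq\Z^n$ definable. Writing $\tilde A_b=\{x:(x,b)\in\tilde A\}$ for the fibres, each $\tilde A_b$ is definable in the structure induced on $[0,1)$, which is o-minimal by local o-minimality, so o-minimal cell decomposition governs the $\tilde A_b$. If the family $\{\tilde A_b:b\in\Z^n\}$ has only finitely many members $X_1,\dots,X_k$, then $Y_i=\{b:\tilde A_b=X_i\}$ is definable and $\tilde A=\bigcup_i X_i\times Y_i$. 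So everything reduces to the \emph{finiteness of the fibre family}.

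The engine of the argument is the following observation, which I would isolate as a lemma: every $\Sa S$-definable map $g\colon S\to[0,1)^N$ out of a definable discrete set $S$ (a subset of $\Z^k$, or of $\R$) has finite image. Indeed, $\mathrm{im}(g)$ is a definable subset of the bounded set $[0,1)^N$, hence definable in the induced o-minimal structure; were it infinite it would have dimension at least one and so contain the image of a definable injection $\iota\colon(c,d)\to[0,1)^N$ with $\iota(c,d)\subseteq\mathrm{im}(g)$. Using the order and the definable absolute value, every nonempty definable subset of $S$ has a definable distinguished element (least $\ell^\infty$-norm, ties broken lexicographically), so there is a definable $\chi\colon(c,d)\to S$ with $g(\chi(y))=\iota(y)$; since $\iota$ is injective so is $\chi$, giving an injection of the interval $(c,d)$ into the discrete, hence countable, set $S$, which is impossible on cardinality grounds. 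This lemma turns any bounded ``continuous datum'' attached definably to $b$ into one of finitely many values.

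With the lemma in hand the one-variable case is clean. The boundary $\partial A\subseteq\R$ is definable and, by local o-minimality, discrete; its gap function $s\mapsto\min\{s'\in\partial A:s'>s\}-s$ is a definable map from the discrete set $\partial A$, and applying the lemma to its restriction to $\{s:\text{gap}<1\}$ shows the small gaps take finitely many positive values, so all gaps exceed a fixed $\eta>0$. Hence each fibre $\tilde A_b\subseteq[0,1)$ has at most $N:=\lceil 1/\eta\rceil+2$ boundary points; the $i$-th boundary point $e_i(b)$ and the in/out pattern $\pi(b)\in\{0,1\}^{N+1}$ are definable functions of $b$, the lemma forces $(e_1,\dots,e_N)$ to have finite image, and together with the finite range of $\pi$ this yields finitely many fibres, as required.

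For general $n$ I would argue by induction, running an o-minimal cell decomposition inside the bounded fibre $[0,1)^n$, attaching to each $b$ the finitely many definable functions whose graphs bound the cells of $\tilde A_b$, and then applying the finite-image lemma to their codes to conclude that only finitely many fibres occur. The one genuinely delicate point, and what I expect to be the main obstacle, is to make this cell decomposition \emph{uniform in} $b\in\Z^n$, that is, to bound the number of cells of $\tilde A_b$ independently of $b$: the gap argument is precisely the $n=1$ instance of this, and the higher-dimensional propagation of uniform boundedness is exactly the local cell decomposition for locally o-minimal structures furnished by \cite{KTTT}. Once the uniform bound is in place the defining data of $\tilde A_b$ lives in a fixed bounded Euclidean space and depends definably on $b$, so the finite-image lemma closes the argument just as in the one-variable case.
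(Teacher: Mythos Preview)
The paper does not prove this statement at all: it is recorded as a \emph{Fact} and attributed to \cite[Theorem~24]{KTTT}. So there is no ``paper's own proof'' to compare against; your proposal is really a reconstruction of (part of) the KTTT argument.

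That said, your reconstruction is sound. The reduction to showing that the fibre family $\{\tilde A_b:b\in\Z^n\}$ is finite is correct, and your finite-image lemma for definable maps from discrete sets into $[0,1)^N$ is both true and the natural engine here; the cardinality contradiction via a definable section is clean (for $S\subseteq\R$ discrete and definable, local o-minimality makes $S\cap[-M,M]$ finite for every $M$, so ``element of least absolute value, positive if tied'' does give a definable choice). Your $n=1$ argument via the gap function is complete. For general $n$ you are honest that the real content is a \emph{uniform} bound on the cell complexity of the fibres $\tilde A_b$, and you correctly point to the local cell decomposition of \cite{KTTT} as supplying exactly this. One small point worth making explicit in the inductive step: the ``codes'' to which you intend to apply the finite-image lemma must be arranged to live in a fixed bounded box $[0,1)^N$; this is precisely what the uniform decomposition buys you, since the bounding functions of the cells of $\tilde A_b$ take values in $[0,1)$ and (inductively) their domains are among finitely many cells in $[0,1)^{n-1}$, so a uniform parametrisation by bounded data is available.
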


\noindent
Proposition~\ref{prop:local-1} yields the second claim of Theorem~\ref{thm:main}.

\begin{prop}
\label{prop:local-1}
Suppose $\Sa R$ is o-minimal.
If $(\Sa R,\Z)$ is locally o-minimal then $\Sa R$ has no poles and rational global scalars.
If $\Sa R$ has no poles and rational global scalars then $(\Sa R,\Z)$ is locally o-minimal and furthermore every $(\Sa R,\Z)$-definable subset of every $\R^n$ is a finite union of sets of the form $\bigcup_{b \in Y} b + X$ for $\Sa R$-definable $X \subseteq [0,1)^n$ and $(\Z,<,+)$-definable $Y \subseteq \Z^n$.
\end{prop}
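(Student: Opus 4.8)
The plan is to prove the two implications separately, treating the ``only if'' direction by contraposition and the ``if'' direction by a construction feeding into Fact~\ref{fact:kttt-1}.

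For the first implication I would show that each forbidden feature destroys local o-minimality by producing a bad subset of a bounded interval. Suppose first that $\Sa R$ has a pole $\gamma\colon I\to J$ with $I$ bounded and $J$ unbounded. By o-minimality $\gamma$ is monotone and continuous on some subinterval $I'\subseteq I$ with $\gamma(I')$ unbounded, so $\gamma|_{I'}$ is a homeomorphism onto an unbounded interval and $\gamma^{-1}(\Z)\cap I'$ is an infinite discrete subset of the bounded interval $I'$. This set is $(\Sa R,\Z)$-definable and is not a finite union of points and intervals, so the structure induced on a bounded interval containing $I'$ is not o-minimal, and $(\Sa R,\Z)$ is not locally o-minimal. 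Suppose next that $\Sa R$ defines $t\mapsto\lambda t$ for some irrational $\lambda$. Then $\lambda\Z+\Z$ is an $(\Sa R,\Z)$-definable dense subgroup of $\R$, so its trace on $(0,1)$ is dense and codense and again fails to be a finite union of points and intervals. Hence local o-minimality of $(\Sa R,\Z)$ forces $\Sa R$ to have no poles and rational global scalars.

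For the converse I would assume $\Sa R$ has no poles and rational global scalars. By Fact~\ref{fact:ed} I may replace $\Sa R$ by an interdefinable structure of the form $(\R,<,+,\Cal B)$ with $\Cal B$ a collection of bounded sets; in particular every rational scalar is already definable. Let $\Sa I$ be the structure induced on $[0,1)$ by $\Sa R$. Since $\Sa R$ is o-minimal, $\Sa I$ is o-minimal, and because addition together with the parameter $1$ is available, the graph of $\tilde{+}$ is an $\Sa R$-definable subset of $\R^3$, so $\Sa I$ expands $([0,1),<,\tilde{+})$. Taking $\Sa Z=(\Z,<,+)$ and applying Fact~\ref{fact:kttt-1} produces a locally o-minimal expansion $\Sa S$ of $(\R,<,+)$ whose definable sets are exactly the finite unions of sets $\bigcup_{b\in Y}b+X$ with $X\subseteq[0,1)^n$ being $\Sa I$-definable and $Y\subseteq\Z^n$ being $(\Z,<,+)$-definable. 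It is immediate that $\Sa S$ is a reduct of $(\Sa R,\Z)$, since each generating set is $(\Sa R,\Z)$-definable: $\Sa I$-definable subsets of $[0,1)^n$ are $\Sa R$-definable and $(\Z,<,+)$-definable subsets of $\Z^n$ are $(\Sa R,\Z)$-definable. Taking $Y=\Z$ and $X=\{0\}$ shows $\Z$ is $\Sa S$-definable, so it remains only to see that every $\Sa R$-definable $W\subseteq\R^n$ is $\Sa S$-definable; once this is known $\Sa S$ and $(\Sa R,\Z)$ are interdefinable, whence $(\Sa R,\Z)$ is locally o-minimal and the displayed normal form follows from that of $\Sa S$.

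The heart of the argument, and the step I expect to be the main obstacle, is this grid decomposition of an arbitrary $\Sa R$-definable $W\subseteq\R^n$. Setting $X_b=(W-b)\cap[0,1)^n$ for $b\in\Z^n$ one has $W=\bigcup_{b\in\Z^n}b+X_b$, so it suffices to prove that $\{X_b:b\in\Z^n\}$ is finite and that each set $\{b\in\Z^n:X_b=X\}$ is $(\Z,<,+)$-definable; the desired normal form for $W$ then follows by grouping the $b$ according to the value of $X_b$. To establish finiteness and Presburger-definability of the index sets I would invoke Edmundo's structure theorem for semibounded o-minimal structures~\cite{ed-str}, which writes $W$ as a finite union of cones $B+\sum_i\R_{>0}v_i$ with $B$ bounded and definable. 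Rational global scalars force the direction vectors $v_i$ to be rational, since a definable ray of irrational slope would define an irrational scalar; clearing denominators gives integer vectors $w_i$ along which each cone is eventually translation-invariant. This eventual periodicity makes $b\mapsto X_b$ take only finitely many values and makes each fibre eventually periodic, hence $(\Z,<,+)$-definable, completing the decomposition. The essential use of the hypothesis is exactly here: without rational scalars the cone directions would be irrational and $b\mapsto X_b$ would take infinitely many values, matching the failure of local o-minimality exhibited in the first paragraph.
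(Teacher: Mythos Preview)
Your first implication and the setup for the second---defining $\Sa I$, building $\Sa S$ from $\Sa I$ and $(\Z,<,+)$ via Fact~\ref{fact:kttt-1}, observing that $\Sa S$ is a reduct of $(\Sa R,\Z)$ and that $\Z$ is $\Sa S$-definable, and reducing the problem to showing $\Sa R$ is a reduct of $\Sa S$---all match the paper's proof exactly.

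The divergence is in that last step, where you work much harder than the paper does. You have already applied Fact~\ref{fact:ed} to replace $\Sa R$ by an interdefinable $(\R,<,+,\Cal B)$ with each $B\in\Cal B$ bounded, but you then set this aside and attempt to put an \emph{arbitrary} $\Sa R$-definable $W$ directly into $\Sa S$-normal form via a grid decomposition and a cone analysis. The paper instead observes that since $\Sa S$ already expands $(\R,<,+)$, it suffices to check that each generator $B\in\Cal B$ is $\Sa S$-definable; and since $B$ is bounded, a rational rescaling and translation (definable in $(\R,<,+)$) carries it into some $[0,1)^n$, where it is $\Sa I$-definable and hence $\Sa S$-definable. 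That finishes the proof in one line. Your cone argument (rational global scalars force rational direction vectors, hence eventual integer-periodicity of $b\mapsto X_b$ and Presburger-definable fibres) is a plausible alternative route, but it re-derives by hand the unbounded structure that Fact~\ref{fact:ed} has already absorbed into $(\R,<,+)$: you are reaching for a second, finer consequence of Edmundo's work when the coarser one you already quoted is enough. The ``main obstacle'' you anticipate is therefore entirely avoidable.
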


\begin{proof}
Suppose $\Sa R$ has a pole $\gamma : I \to J$.
Applying the o-minimal monotonicity theorem \cite[3.1.2]{lou-book} and shrinking $I$ and $J$ if necessary we suppose that $\gamma$ is continuous and strictly increasing or strictly decreasing.
After possibly reflecting and translating we suppose that $\R_{>0} \subseteq J$ and $\gamma$ is strictly increasing.
Then $\gamma^{-1}(\N)$ is an infinite bounded discrete subset of $\R$ so $(\Sa R,\Z)$ is not locally o-minimal.
Suppose $\lambda \in \R \setminus \Q$ is such that the function $\R \to \R$ given by $t \mapsto \lambda t$ is $\Sa R$-definable.
Then $\Z + \lambda\Z$ is dense and co-dense, so $(\Sa R,\Z)$ is not locally o-minimal.
\newline

\noindent
Now suppose that $\Sa R$ has no poles and rational global scalars.
Let $\Sa I$ be the structure induced on $[0,1)$ by $\Sa  R$.
Let $\Sa S$ be constructed from $\Sa I$ and $(\Z,<,+)$ as in the statement of Fact~\ref{fact:kttt-1}.
It is enough to show that $\Sa S$ and $(\Sa R,\Z)$ are interdefinable.
Every $\Sa I$-definable subset of $[0,1)^n$ is trivially $\Sa R$-definable, so it follows from Fact~\ref{fact:kttt-1} that $\Sa S$ is a reduct of $(\Sa R, \Z)$.
We show that $(\Sa R,\Z)$ is a reduct of $\Sa S$.
As $\Sa S$ defines $\Z$ it suffices to show that $\Sa R$ is a reduct of $\Sa S$.
Applying Fact~\ref{fact:ed} let $\Cal B$ be a collection of bounded $\Sa R$-definable sets such that $\Sa R$ and $(\R,<,+,\Cal B)$ are interdefinable.
Rescaling and translating, we may assume that each $X \in \Cal B$ is a subset of some $[0,1)^n$.
So every $X \in \Cal B$ is $\Sa I$-definable, hence $\Sa S$-definable.
So $(\R,<,+,\Cal B)$ is a reduct of $\Sa S$.
\end{proof}

\noindent
If $\Sa M$ is a structure with domain $M$ and $\Sa N$ is an expansion of $\Sa M$, then we say that $\Sa N$ is $\Sa M$-minimal if every $\Sa N$-definable subset of $M$ is $\Sa M$-definable.
($\Sa R$ is o-minimal if and only if $\Sa R$ is $(\R,<)$-minimal.)
Corollary~\ref{cor:minimal} follows from Proposition~\ref{prop:local-1}.

\begin{cor}
\label{cor:minimal}
Suppose $\Sa R$ is o-minimal and $(\Sa R,\Z)$ is locally o-minimal.
Then $(\Sa R,\Z)$ is $(\R,<,+,\Z)$-minimal.
\end{cor}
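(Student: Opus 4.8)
The plan is to apply the structure theorem in the second half of Proposition~\ref{prop:local-1} and then dispatch the one-dimensional case directly. Since $(\Sa R,\Z)$ is locally o-minimal, the first assertion of Proposition~\ref{prop:local-1} tells us that $\Sa R$ has no poles and rational global scalars, so the second assertion applies: every $(\Sa R,\Z)$-definable subset of $\R^n$ is a finite union of sets of the form $\bigcup_{b \in Y} b + X$ with $X \subseteq [0,1)^n$ $\Sa R$-definable and $Y \subseteq \Z^n$ $(\Z,<,+)$-definable. Being $(\R,<,+,\Z)$-minimal is a statement about subsets of the domain $\R$, so I only need the case $n = 1$: it suffices to show that each set $\bigcup_{b \in Y} b + X$, with $X \subseteq [0,1)$ $\Sa R$-definable and $Y \subseteq \Z$ $(\Z,<,+)$-definable, is $(\R,<,+,\Z)$-definable, finite unions being harmless.

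Next I would check that the two building blocks are individually $(\R,<,+,\Z)$-definable. Because $\Sa R$ is o-minimal, $X$ is a finite union of points and open intervals; as definability is allowed with parameters, such an $X$ is already definable in $(\R,<)$ using its endpoints as parameters, hence in $(\R,<,+,\Z)$. Since $(\R,<,+,\Z)$ defines $\Z$ together with the restrictions of $<$ and $+$, every $(\Z,<,+)$-definable subset of $\Z$ is $(\R,<,+,\Z)$-definable; in particular $Y$ is.

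Finally I would assemble the two pieces using the integer-part decomposition. The floor map is $(\R,<,+,\Z)$-definable, since $b = \lfloor z \rfloor$ is expressed by $b \in \Z \wedge b \le z < b+1$. For $z \in \R$ the representation $z = b + x$ with $b \in \Z$ and $x \in [0,1)$ is unique, namely $b = \lfloor z \rfloor$ and $x = z - \lfloor z \rfloor$, so $z \in \bigcup_{b \in Y} b + X$ if and only if $\lfloor z \rfloor \in Y$ and $z - \lfloor z \rfloor \in X$. Both conditions are $(\R,<,+,\Z)$-definable by the previous paragraph, so the whole set is, completing the proof.

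The step I expect to matter most is the restriction to $n = 1$. The argument that the o-minimal factor $X$ is $(\R,<,+,\Z)$-definable rests on the fact that unary o-minimal sets are merely finite unions of intervals and points; this fails in higher dimensions, where an $\Sa R$-definable $X \subseteq [0,1)^n$ can be far more complex than anything $(\R,<,+,\Z)$ defines. It is precisely because $(\R,<,+,\Z)$-minimality only concerns one-dimensional sets that the proof goes through with no further input.
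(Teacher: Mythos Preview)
Your argument is correct and is exactly the natural way to unpack the paper's one-line justification, which simply states that the corollary follows from Proposition~\ref{prop:local-1}. The key observation you isolate---that for $n=1$ the $\Sa R$-definable factor $X\subseteq[0,1)$ is a finite union of points and intervals and hence already $(\R,<)$-definable with parameters---is precisely what makes the deduction immediate, and your assembly via the floor map is the standard move.
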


\newpage

\noindent
Theorem~\ref{thm:main-1} finishes the proof of Theorem~\ref{thm:main}.

\begin{thm}
\label{thm:main-1}
Suppose $\Sa R$ is o-minimal, $\alpha > 0$, and $(\Sa R,\az)$ is locally o-minimal.
Then there are no intermediate structures between $\Sa R$ and $(\Sa R,\az)$.
\end{thm}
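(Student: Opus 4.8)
The plan is to reduce to $\alpha = 1$ and then reduce the whole statement to the unary case, where the theorem of B\`{e}s and Choffrut~\cite{bes-choffrut} applies. For the first reduction, the map $x \mapsto x/\alpha$ is an automorphism of $(\R,<,+)$ taking $\az$ to $\Z$, hence gives an isomorphism of $(\Sa R,\az)$ with $(\Sa R',\Z)$ for some o-minimal $\Sa R'$ with $(\Sa R',\Z)$ locally o-minimal; since an isomorphism preserves the collection of reducts it is enough to treat $\alpha = 1$. So I fix a structure $\Sa N$ such that $\Sa R$ is a reduct of $\Sa N$ and $\Sa N$ is a reduct of $(\Sa R,\Z)$, and I aim to show that $\Sa N$ is interdefinable with $\Sa R$ or with $(\Sa R,\Z)$.

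By Corollary~\ref{cor:minimal} every unary $\Sa N$-definable subset of $\R$ is $(\R,<,+,\Z)$-definable. I would let $\Sa P$ be the reduct of $(\R,<,+,\Z)$ generated by $(\R,<,+)$ together with all unary $\Sa N$-definable subsets of $\R$; then $(\R,<,+)$ is a reduct of $\Sa P$ and $\Sa P$ is a reduct of $(\R,<,+,\Z)$, so by~\cite{bes-choffrut} $\Sa P$ is interdefinable with one of these two. If $\Sa P$ is interdefinable with $(\R,<,+,\Z)$, then $\Z$ is definable from $(\R,<,+)$ and the unary $\Sa N$-definable sets, all of which are $\Sa N$-definable; hence $\Z$ is $\Sa N$-definable and $\Sa N$ is interdefinable with $(\Sa R,\Z)$. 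So I may instead assume that every unary $\Sa N$-definable set is $(\R,<,+)$-definable, thus a finite union of points and open intervals, which makes $\Sa N$ o-minimal; it then remains to prove $\Sa N$ interdefinable with $\Sa R$.

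This last case is the heart of the argument. Because $(\Sa R,\Z)$ is locally o-minimal it has no poles and no irrational global scalars: a pole would make some infinite bounded discrete set definable, while an irrational scalar $\lambda$ would make the dense, co-dense set $\Z + \lambda\Z$ definable, each contradicting local o-minimality. As $\Sa N$ is a reduct of $(\Sa R,\Z)$, the o-minimal structure $\Sa N$ likewise has no poles and rational global scalars, so Fact~\ref{fact:ed} presents $\Sa N$ as interdefinable with $(\R,<,+,\Cal B)$ for some collection $\Cal B$ of bounded $\Sa N$-definable sets. It thus suffices to show every bounded $\Sa N$-definable set is $\Sa R$-definable. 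Since any bounded $\Sa N$-definable $B \sub \R^n$ is a finite union of integer translates of $\Sa N$-definable subsets of $[0,1)^n$, I reduce to the case $B \sub [0,1)^n$. Now Proposition~\ref{prop:local-1} writes each $(\Sa R,\Z)$-definable set as a finite union of sets $\bigcup_{b \in Y} b + X$ with $X \sub [0,1)^n$ being $\Sa R$-definable and $Y \sub \Z^n$; as the cubes $b + [0,1)^n$ tile $\R^n$, intersecting with $[0,1)^n$ kills every term but $b = 0$, so $\Sa R$ and $(\Sa R,\Z)$ induce the same structure on $[0,1)$. Sandwiched between them, $\Sa N$ induces this same structure, so $B$ is $\Sa R$-definable and $\Sa N$ is interdefinable with $\Sa R$.

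The genuine difficulty is concentrated in the o-minimal case, since an o-minimal structure is not determined by its unary reducts, and the collapse of $\Sa N$ to $\Sa R$ needs the two structure theorems in tandem: Fact~\ref{fact:ed} to cut the problem down to bounded sets, and the decomposition of Proposition~\ref{prop:local-1} (via~\cite{KTTT}) to push those bounded sets into $[0,1)^n$, where the induced-structure identity applies. I expect the exclusion of poles and irrational scalars and the integer-translation bookkeeping to be routine; the conceptual crux is the observation that passing from $\Sa R$ to $(\Sa R,\Z)$ adds nothing to the structure induced on $[0,1)$.
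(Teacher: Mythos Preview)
Your proof is correct and follows essentially the same route as the paper: reduce to $\alpha=1$, split on whether the intermediate structure is o-minimal, use B\`{e}s--Choffrut on a unary witness to recover $\Z$ in the non-o-minimal case, and in the o-minimal case combine Edmundo's structure theorem (Fact~\ref{fact:ed}) with the decomposition in Proposition~\ref{prop:local-1} to see that bounded $(\Sa R,\Z)$-definable sets are already $\Sa R$-definable. Your packaging of the non-o-minimal case via the auxiliary structure $\Sa P$ is slightly more elaborate than the paper's (which just picks a single unary set $X$ not in $\Sa R$ and applies B\`{e}s--Choffrut to $(\R,<,+,X)$), but the content is identical.
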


\begin{proof}
Rescaling reduces to the case when $\alpha = 1$.
Suppose $\Sa S$ is a reduct of $(\Sa R,\Z)$ and $\Sa R$ is a reduct of $\Sa S$.
\newline

\noindent
Suppose $\Sa S$ is o-minimal.
As $(\Sa S,\Z)$ is locally o-minimal Proposition~\ref{prop:local-1} shows that $\Sa S$ has no poles and rational global scalars.
Applying Fact~\ref{fact:ed} let $\Cal B$ be a collection of bounded $\Sa S$-definable sets such that $\Sa S$ and $(\R,<,+,\Cal B)$ are interdefinable.
It follows from Proposition~\ref{prop:local-1} that every bounded $(\Sa R,\Z)$-definable set is $\Sa R$-definable.
So $\Sa S$ is interdefinable with $\Sa R$.
\newline

\noindent
Suppose $\Sa S$ is not o-minimal.
Then $\Sa S$ defines a subset $X$ of $\R$ which is not $\Sa R$-definable.
By Corollary~\ref{cor:minimal} $X$ is definable in $(\R,<,+,\Z)$.
So $(\R,<,+,X)$ defines $\Z$ by B\`{e}s and Choffrut~\cite{bes-choffrut}.
Thus $\Sa S$ and $(\Sa R,\Z)$ are interdefinable.
\end{proof}

\noindent
We prove two corollaries.

\begin{cor}
\label{cor:cob}
Suppose $\Sa R$ is o-minimal and has no poles and rational global scalars.
Suppose $\alpha,\beta > 0$ and $\alpha/\beta \notin \Q$.
Suppose $X \subseteq \R^n$ is definable in both $(\Sa R,\az)$ and $(\Sa R,\beta\Z)$.
Then $X$ is definable in $\Sa R$.
\end{cor}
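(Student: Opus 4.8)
The plan is to study the single expansion $(\Sa R,X)$ and to play the two definability hypotheses on $X$ against each other via the no-intermediate-structures theorem. First I would record that, since $\Sa R$ has no poles and rational global scalars, the final clause of Theorem~\ref{thm:main} makes both $(\Sa R,\az)$ and $(\Sa R,\beta\Z)$ locally o-minimal. Because $X$ is definable in $(\Sa R,\az)$, the structure $(\Sa R,X)$ is a reduct of $(\Sa R,\az)$; for the same reason it is a reduct of $(\Sa R,\beta\Z)$; and in both cases $\Sa R$ is a reduct of $(\Sa R,X)$. Theorem~\ref{thm:main-1} applies separately with $\alpha$ and with $\beta$, telling us that there is no structure strictly between $\Sa R$ and $(\Sa R,\az)$, and none strictly between $\Sa R$ and $(\Sa R,\beta\Z)$.

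Arguing by contradiction, I would assume $X$ is not $\Sa R$-definable. Then $(\Sa R,X)$ is not interdefinable with $\Sa R$, so by the absence of an intermediate structure it must be interdefinable with $(\Sa R,\az)$; in particular $(\Sa R,X)$ defines $\az$. Running the identical argument with $\beta$ in place of $\alpha$ shows that $(\Sa R,X)$ also defines $\beta\Z$. Hence $(\Sa R,X)$ defines the sumset $\{s+t : s\in\az,\ t\in\beta\Z\}$, which is exactly the subgroup $\alpha\Z+\beta\Z$ of $(\R,+)$.

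The crux is then a density observation. Since $\alpha/\beta\notin\Q$ this subgroup is not cyclic, hence is dense in $\R$; being countable it is a proper subgroup, so its complement is dense as well, because any coset $x+(\alpha\Z+\beta\Z)$ with $x\notin\alpha\Z+\beta\Z$ lies in the complement and is dense. Thus $\alpha\Z+\beta\Z$ is both dense and co-dense. On the other hand $(\Sa R,X)$ is a reduct of the locally o-minimal structure $(\Sa R,\az)$, and any reduct of a locally o-minimal structure is locally o-minimal, so every definable subset of $\R$ in $(\Sa R,X)$ must, near each point, be a finite union of points and open intervals. A dense and co-dense set has no such description near any point, giving the contradiction and forcing $X$ to be $\Sa R$-definable.

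I expect the only genuine subtlety to be the bookkeeping around the scalars: one cannot simply rescale so that $\alpha=1$, since $\Sa R$ carries only rational global scalars and the hypothesis $\alpha/\beta\notin\Q$ prevents normalizing both parameters at once, so Theorem~\ref{thm:main-1} must be invoked as a black box for each of $\alpha$ and $\beta$ in turn. Granting that, together with the (routine) facts that local o-minimality passes to reducts and that a dense proper subgroup is co-dense, the argument is just the dichotomy from the no-intermediate-structures theorem applied twice.
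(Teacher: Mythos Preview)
Your argument is correct and is exactly the paper's proof, only written out in full: assume $X$ is not $\Sa R$-definable, use Theorem~\ref{thm:main-1} twice to see that $(\Sa R,X)$ defines both $\alpha\Z$ and $\beta\Z$, and then observe that the dense co-dense set $\alpha\Z+\beta\Z$ contradicts local o-minimality of $(\Sa R,X)$ (which holds since it is a reduct of the locally o-minimal $(\Sa R,\alpha\Z)$). The additional remarks you make about rescaling and about why local o-minimality passes to reducts are accurate but not needed beyond what the paper already assumes implicitly.
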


\noindent
Note that if $\alpha/\beta \in \Q$ then $(\Sa R,\az)$ and $(\Sa R,\beta\Z)$ are interdefinable.

\begin{proof}
Suppose $X$ is not $\Sa R$-definable.
By Theorem~\ref{thm:main-1} $(\Sa R,X)$ defines $\az,\beta\Z$.
As $\az + \beta\Z$ is dense and co-dense in $\R$, $(\Sa R,X)$ is not locally o-minimal, contradiction.
\end{proof}

\noindent
A function $f : \R \to \R^n$ is periodic with period $\alpha \neq 0$ if $f(t + \alpha) = f(t)$ for all $t$.

\begin{cor}
\label{cor:sin}
Suppose $f : \R \to \R^n$ is analytic and  periodic with period $\alpha > 0$.
Then there is no structure intermediate between $(\R,<,+,f|_{[0,\alpha]})$ and $(\R,<,+,f)$.
\end{cor}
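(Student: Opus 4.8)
The plan is to write $\Sa R = (\R,<,+,\fa)$ and deduce the corollary from Theorem~\ref{thm:main} applied to $\Sa R$. Concretely, I would verify that (a) $\Sa R$ is o-minimal with no poles and rational global scalars, and (b) $(\Sa R,\az)$ is interdefinable with $(\R,<,+,f)$. Granting these, Theorem~\ref{thm:main} gives that no structure is intermediate between $\Sa R$ and $(\Sa R,\az)$; since being intermediate depends only on the reduct preorder and hence is unaffected by passing to interdefinable structures, this is exactly the assertion that no structure is intermediate between $(\R,<,+,\fa)$ and $(\R,<,+,f)$. I may assume $f$ is nonconstant: if $f$ is constant then $(\R,<,+,f)$ cannot recover the interval $[0,\alpha]$ and so does not define $\fa$, whence $(\R,<,+,\fa)$ is not a reduct of $(\R,<,+,f)$ and the statement holds vacuously.

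For (a), note first that $\gr(\fa)$ is a bounded subset of $\R^{n+1}$, being the graph of a continuous function on the compact interval $[0,\alpha]$. As $f$ is analytic on a neighborhood of $[0,\alpha]$, after an affine change of the variable this graph is definable in the o-minimal structure $\R_{\mathrm{an}}$ of restricted analytic functions. O-minimality is inherited by every reduct that still expands $(\R,<)$, because a definable subset of $\R$ in the reduct is a definable subset of $\R$ in $\R_{\mathrm{an}}$ and hence a finite union of points and intervals; therefore $\Sa R$ is o-minimal. Since $\Sa R$ is by construction the expansion of $(\R,<,+)$ by the single bounded set $\gr(\fa)$, the final clause of Fact~\ref{fact:ed} shows that $\Sa R$ has no poles and rational global scalars, so Theorem~\ref{thm:main} applies.

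For (b), the inclusion of $(\R,<,+,f)$ into $(\Sa R,\az)$ is the design of the construction: for each $t$ there is a unique $k\alpha \in \az$ with $t - k\alpha \in [0,\alpha)$, and then $f(t) = \fa(t-k\alpha)$ by periodicity, so $f$ is $(\Sa R,\az)$-definable. For the reverse inclusion, $\fa$ is a restriction of $f$ and so it remains only to define $\az$ inside $(\R,<,+,f)$. I would do this through the definable group of periods $P = \{\, s : f(t+s)=f(t) \text{ for all } t \,\}$, a closed subgroup of $(\R,+)$; since $f$ is nonconstant and continuous, $P$ is not dense and hence equals $\beta\Z$ for its least positive element $\beta$, while $\alpha \in P$ forces $\alpha = m\beta$ for some positive integer $m$. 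Then $\az = \{\, m s : s \in P \,\}$ is definable because multiplication by the fixed integer $m$ is definable in $(\R,+)$, so $\az$, and with it $(\Sa R,\az)$, is a reduct of $(\R,<,+,f)$.

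The main obstacle is this last definition of $\az$ from $f$ alone. The delicate point is that $\alpha$ need not be the minimal period of $f$, so one cannot read $\az$ off directly; instead one recovers the full period group $P = \beta\Z$ and then exploits that $\az$ is an \emph{integer} rescaling of $P$, which is available over $(\R,+)$. The discreteness of $P$ is precisely where the nonconstancy of the continuous function $f$ enters. Once interdefinability of $(\Sa R,\az)$ and $(\R,<,+,f)$ is established, Theorem~\ref{thm:main} closes the argument.
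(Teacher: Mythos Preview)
Your argument is essentially the paper's: show $\Sa R=(\R,<,+,\fa)$ is o-minimal via restricted analytic functions (the paper phrases this as Gabrielov's complement theorem), invoke Fact~\ref{fact:ed} to get no poles and rational global scalars, prove $(\Sa R,\az)$ and $(\R,<,+,f)$ are interdefinable by recovering $\az$ from the definable period group, and then apply Theorem~\ref{thm:main}. The nonconstant case is correct and matches the paper line for line.

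Your handling of the constant case contains an error, though. You write that $(\R,<,+,f)$ ``cannot recover the interval $[0,\alpha]$'' and therefore does not define $\fa$, so that $(\R,<,+,\fa)$ is not a reduct of $(\R,<,+,f)$. But the paper's standing convention is that definability allows parameters; with $\alpha$ as a parameter the interval $[0,\alpha]$ is already definable in $(\R,<)$, and a constant function is definable using its value as a parameter. Thus when $f$ is constant both $(\R,<,+,\fa)$ and $(\R,<,+,f)$ are interdefinable with $(\R,<,+)$, hence with each other. The corollary still holds in this case, but because neither structure is a \emph{proper} reduct of the other, not because the reduct relation fails. This is exactly how the paper disposes of the constant case.
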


\noindent
So there is no structure intermediate between $(\R,<,+,\sin|_{[0,2\pi]})$ and $(\R,<,+,\sin)$.

\begin{proof}
If $f$ is constant then $(\R,<,+,f)$, $(\R,<,+,f|_{[0,\alpha]})$, and $(\R,<,+)$ are all interdefinable.
Suppose $f$ is non-constant.
\newline

\noindent
We show that $(\R,<,+,\fa,\az)$ and $(\R,<,+,f)$ are interdefinable.
We have
$$ f(t) = f( t - \max\{ a \in \az : a < t \}) \quad \text{for all  } t.$$
It follows that $f$ is $(\R,<,\fa,\az)$-definable.
To show that $(\R,<,+,\fa,\az)$ is a reduct of $(\R,<,+,f)$ it suffices to show that $\az$ is $(\R,<,+,f)$-definable.
Let $P$ be the set of $r \neq 0$ such that $f$ is $r$-periodic.
Then $P$ is an $(\R,<,+,f)$-definable subgroup of $(\R,+)$.
As $f$ is continuous and non-constant $P$ is not dense in $\R$, so $P = \lambda\Z$ for some $\lambda > 0$.
Then $\alpha \in \lambda\Z$, it follows that $\az$ is $(\R,<,+,\lambda\Z)$-definable.
\newline

\noindent
It is well-known that Gabrielov's complement theorem implies $(\R,<,+,g|_I)$ is o-minimal for any analytic $g : \R \to \R^n$ and bounded interval $I$.
So $(\R,<,+,f|_{[0,\alpha]})$ is o-minimal.
Applying Proposition~\ref{prop:local-1} and rescaling we see that $(\R,<,+,f|_{[0,\alpha]},\az)$ is locally o-minimal.
Now apply Theorem~\ref{thm:main-1}.
\end{proof}

\section{Expansions of o-minimal structures by $\Z$}
\label{section:survey}
\noindent
We survey structures of the form $(\Sa R,\Z)$ where $\Sa R$ is o-minimal.
This will require fundamental classification results on o-minimal structures and key results from the theory of general expansions of $(\R,<,+)$.
This class of structures contains, sometimes in a somewhat disguised form, many interesting structures.
\newline

\noindent
We first describe two opposing examples.
It is well-known that $(\R,<,+,\cdot,\Z)$ defines all Borel subsets of all $\R^n$.
So $(\R,<,+,\cdot,\Z)$ is totally wild from the model-theoretic viewpoint.
Fix $\lambda > 0$ and define $\lambda^\Z := \{ \lambda^m : m \in \Z \}$.
It follows from work of van den Dries~\cite{vdd-Powers2} that $(\R,<,+,\cdot,\lambda^\Z)$ admits quantifier elimination in a natural expanded language.
It follows that this structure is $\nip$ and definable sets are geometrically tame.
Let $\Sa I$ be the structure induced on $\R_{>0}$ by $(\R,<,+,\cdot)$ and $\Sa R$ be the pushforward of $\Sa I$ by $\log_\lambda : \R_{>0} \to \R$.
Then $\Sa R$ is an o-minimal expansion of $(\R,<,+)$ and $(\Sa R, \Z)$ is isomorphic to the structure induced on $\R_{>0}$ by $(\R,<,+,\cdot,\lambda^\Z)$.
So in this case $(\Sa R,\Z)$ is tame and not locally o-minimal.
\newline

\noindent
We gather some background results.
We say that $\Sa S$ is field-type if there is a bounded interval $I$ and $\Sa S$-definable $\oplus,\otimes : I^2 \to I$ such that $(I,<,\oplus,\otimes)$ and $(\R,<,+,\cdot)$ are isomorphic.
Fact~\ref{fact:ps-tri} is a part of the Peterzil-Starchenko trichotomy theorem~\cite{PS-Tri}.

\begin{fact}
\label{fact:ps-tri}
Suppose $\Sa S$ is o-minimal.
Then exactly one of the following holds.
\begin{enumerate}
\item $\Sa S$ is field-type.
\item $\Sa S$ is a reduct of $(\R,<,+,(t \mapsto \lambda t)_{\lambda \in \R })$.
\end{enumerate}
\end{fact}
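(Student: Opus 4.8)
The plan is to derive this global dichotomy from the \emph{local} Peterzil--Starchenko trichotomy together with the homogeneity supplied by the ambient group. Recall that the local trichotomy attaches to each $a \in \R$ exactly one type---trivial, linear, or field-type---recording the behaviour of $\Sa S$-definable functions near $a$. First I would rule out the trivial type everywhere: since $\Sa S$ expands $(\R,<,+)$, the restriction of $+$ to an open box around $(a,a)$ is a definable continuous function strictly monotone in each coordinate, and the existence of such a function is exactly what non-triviality asserts. Hence at every point $\Sa S$ is locally linear or locally field-type, and the trichotomy collapses to a dichotomy.

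Next I would invoke homogeneity. For each $c$ the translation $t \mapsto t+c$ is a definable automorphism of $\Sa S$ carrying a neighbourhood of $0$ onto a neighbourhood of $c$, and automorphisms preserve local types; so the type is constant in $a$, and $\Sa S$ is either linear at every point or field-type at every point. In the field-type case the local structure theorem furnishes $\Sa S$-definable operations making some open interval about $0$ into a real closed field; composing with a definable order-preserving bijection onto a bounded interval (and translating) yields $\oplus,\otimes$ witnessing that $\Sa S$ is field-type, which is alternative (1).

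It remains to show the linear case gives alternative (2), i.e.\ that every $\Sa S$-definable set is already definable in $(\R,<,+,(t\mapsto\lambda t)_{\lambda\in\R})$. Here I would use the linear cell decomposition accompanying the trichotomy: when $\Sa S$ is linear at every point, every definable function is piecewise affine, so on each cell it has the form $\bar x \mapsto \sum_i \lambda_i x_i + c$ with each $\lambda_i$ a definable scalar, hence with each $t\mapsto\lambda_i t$ definable. Thus every cell is cut out by affine equalities and inequalities with definable-scalar coefficients, all of which live in $(\R,<,+,(t\mapsto\lambda t)_{\lambda\in\R})$; ranging over all definable sets shows $\Sa S$ is a reduct of this linear structure. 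The two alternatives are mutually exclusive because a real closed field on a bounded interval produces a pole, whereas piecewise-affine maps send bounded sets to bounded sets.

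The main obstacle is the field-type half of the \emph{local} trichotomy: manufacturing, from a single definable non-affine function, definable operations that satisfy the field axioms. Detecting non-linearity---a definable function whose slope is non-constant---is immediate from o-minimal monotonicity, but upgrading this curvature to an honest multiplication and verifying associativity and distributivity is the technical heart of Peterzil--Starchenko and rests on a delicate analysis of definable families of germs. I would treat this ingredient as a black box. Alternatively, one can route around the explicit field construction using Fact~\ref{fact:ed}: the linear case has no poles, so Edmundo's normal form applies, and alternative (2) then reduces to the statement that, absent poles, the bounded generators $\Cal B$ are themselves piecewise affine---but this last reduction again encodes precisely the ``non-affine bounded set yields a field'' phenomenon, so the essential difficulty is not avoided.
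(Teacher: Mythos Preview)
The paper does not prove Fact~\ref{fact:ps-tri}; it is stated as a \emph{Fact} and simply attributed to the Peterzil--Starchenko trichotomy~\cite{PS-Tri} with no argument supplied. So there is no ``paper's own proof'' to compare your proposal against.

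That said, your sketch is the right way to extract this global dichotomy from the local trichotomy, and the ingredients you name are the correct ones: eliminating the trivial type via the ambient $+$, propagating the local type by translation-homogeneity, invoking the interval-field construction in the non-linear case, and appealing to the piecewise-affine structure theorem (Loveys--Peterzil / Peterzil--Starchenko) in the linear case to land inside $(\R,<,+,(t\mapsto\lambda t)_{\lambda\in\R})$. Your remark that the hard content---turning a single non-affine germ into an honest field---must be taken as a black box is accurate; that is precisely what \cite{PS-Tri} supplies and there is no shortcut. One small point: for mutual exclusivity you do not even need to mention poles; $(\R,<,+,(t\mapsto\lambda t)_{\lambda\in\R})$ is locally modular, while a definable real closed field on an interval is not, so the two alternatives are visibly incompatible at the level of the trichotomy itself.
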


\noindent
We will apply the classification of structures intermediate between $(\R,<,+)$ and $(\R,<,+,(t \mapsto \lambda t)_{\lambda \in \R})$.
This is probably not original.
We associate two subfields of $(\R,+,\cdot)$ to $\Sa S$.
The \textit{field of global scalars} is the set of $\lambda$ such that the function $\R \to \R$, $t \mapsto \lambda t$ is $\Sa S$-definable.
The \textit{field of local scalars} is the set of $\lambda$ for which there is $s > 0$ such that the function $[0,s) \to \R$, $t \mapsto \lambda t$ is $\Sa S$-definable.
It is easy to see that both of these sets are subfields.
It is also easy to see that the field of local scalars is the set of $\lambda$ such that $[0,s) \to \R$, $t \mapsto \lambda t$ is $\Sa S$-definable for any $s > 0$.
Given subfields $K \subseteq L$ of $(\R,+,\cdot)$ we let $\Sa V_{K,L}$ be the expansion of $(\R,<,+)$ by the function $\R \to \R$, $t \mapsto \lambda t$ for all $\lambda \in L$ and by the function $[0,1) \to \R$, $t \mapsto \lambda t$ for all $\lambda \in K$.
We let $\Sa V_{K} := (\R,<,+,(t \mapsto \lambda t)_{\lambda \in K})$ so $\Sa V_{K}$ and $\Sa V_{K,K}$ are interdefinable.

\begin{prop}
\label{prop:classify}
Suppose $\Sa S$ is a reduct of $(\R,<,+,(t \mapsto \lambda t)_{\lambda \in \R})$.
Let $K$ be the field of local scalars of $\Sa S$ and $L$ be the field of global scalars of $\Sa S$.
Then $\Sa S$ is interdefinable with $\Sa V_{K,L}$.
\end{prop}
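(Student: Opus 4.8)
The plan is to prove the two reduct relations separately. That $\Sa V_{K,L}$ is a reduct of $\Sa S$ is immediate from the definitions: for $\lambda\in L$ the global scalar $t\mapsto\lambda t$ is $\Sa S$-definable by the definition of $L$, and for $\lambda\in K$ the restricted scalar $t\mapsto\lambda t$ on $[0,1)$ is $\Sa S$-definable by the definition of $K$, using the fact recorded above that a local scalar is definable on every $[0,s)$. The substance is the reverse inclusion: every $\Sa S$-definable set is $\Sa V_{K,L}$-definable. Since $\Sa S$ is a reduct of the ordered $\R$-vector space it is o-minimal, its cells are convex, and its definable functions are affine on each cell, so I would induct on the ambient dimension $n$ using o-minimal cell decomposition. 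The case $n=1$ is trivial, as subsets of $\R$ are finite unions of points and intervals and hence $(\R,<,+)$-definable. For the inductive step, cell decomposition relative to the last coordinate writes an $\Sa S$-definable $X\subseteq\R^{n+1}$ as a finite union of graphs and bands over $\Sa S$-definable cells $D\subseteq\R^n$; by induction each such $D$ is $\Sa V_{K,L}$-definable, so everything reduces to the following Main Lemma: every $\Sa S$-definable affine function $f$ on an $\Sa S$-definable cell $C\subseteq\R^n$ is $\Sa V_{K,L}$-definable.

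Two slope computations drive the Main Lemma. First, writing $f(x)=\sum_i\lambda_i x_i + c$, the difference quotient $s\mapsto f(x_0+se_i)-f(x_0)=\lambda_i s$ is $\Sa S$-definable on a segment inside the convex cell $C$, so every slope $\lambda_i$ lies in $K$. Second, if $C$ contains a ray $x_0+\R_{\geq 0}v$ then $s\mapsto f(x_0+sv)$ is an $\Sa S$-definable affine function on an unbounded interval, whence its slope lies in $L$; indeed an $\Sa S$-definable affine function on a ray has slope in $L$, since one recovers $t\mapsto(\text{slope})\,t$ on a ray and hence, by reflection, globally. I would then perform a coordinate change by a matrix in $\mathrm{GL}_n(L)$, which is available globally in $\Sa V_{K,L}$, aligning the unbounded directions of $C$ with coordinate axes: in the new coordinates $C$ is bounded in the remaining coordinates, the coefficients of $f$ on the unbounded coordinates are slopes along recession rays and so lie in $L$, while the coefficients on the bounded coordinates lie in $K$. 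Writing $f$ as a sum of global $L$-scalars applied to the unbounded coordinates and local $K$-scalars applied to the bounded ones (local $K$-scalars being definable on any bounded interval by translation and the fact recorded above) then exhibits $f$ as $\Sa V_{K,L}$-definable.

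The crux, and the step I expect to be hardest to get right, is justifying that such a $\mathrm{GL}_n(L)$ coordinate change exists, i.e.\ that the recession cone of an $\Sa S$-definable cell is $L$-rational rather than merely $K$-rational. Here I would argue that the recession cone $R=\{v:x_0+sv\in C\text{ for all }s>0\}$ is $\Sa S$-definable and polyhedral, that its lineality space and its extreme rays are $\Sa S$-definable (the extreme-ray condition ``$v=a+b$ with $a,b\in R$ forces $a,b\in\R_{\geq 0}v$'' is first order over $\Sa S$), and that any $\Sa S$-definable ray through the origin has $L$-rational direction by applying the second slope computation to its coordinate projections. Thus $R$ is spanned by $L$-rational vectors; triangulating $R$ into simplicial $L$-rational cones and correspondingly decomposing $C$ by standard polyhedral geometry reduces the Main Lemma to the aligned case above. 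I expect the convexity of linear cells (so that the recession-cone machinery applies) and the bookkeeping of the polyhedral decomposition to be routine but fiddly, while the conceptual heart is the dichotomy that unbounded directions are controlled by $L$ and bounded behaviour by $K$.
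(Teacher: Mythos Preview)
Your proposal is correct and would go through, but it takes a different route from the paper. The paper's proof is very short: it invokes Edmundo's structure theorem (Fact~\ref{fact:ed}) to write $\Sa S$ as $(\R,<,+,\Cal B,(t\mapsto\lambda t)_{\lambda\in L})$ for a collection $\Cal B$ of \emph{bounded} $\Sa S$-definable sets, rescales each element of $\Cal B$ into $[0,1)^n$, and then observes via the semilinear cell decomposition that any bounded $\Sa S$-definable set has all its slopes in $K$ and is therefore definable from the restricted scalars $[0,1)\to\R$, $t\mapsto\lambda t$, $\lambda\in K$. That is the whole argument: Edmundo's theorem does the separation of ``unbounded behaviour governed by $L$'' from ``bounded behaviour governed by $K$'' in one stroke.

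What you do instead is reprove that separation by hand, via a recession-cone analysis of individual cells. This is more elementary in that it avoids citing the Edmundo classification, and your two slope computations (slopes lie in $K$; slopes along recession directions lie in $L$) are exactly the right ingredients. Two remarks. First, the triangulation of the recession cone is not really needed: once you know the extreme rays of $R$ are $L$-rational, the span $V$ of $R$ is an $L$-rational subspace; after an $L$-linear change sending $V$ to $\R^k\times\{0\}$, the Minkowski--Weyl decomposition $\overline{C}=Q+R$ with $Q$ a polytope shows the last $n-k$ coordinates are bounded on $C$, and since $R$ contains an $L$-basis of $V$ the system of ``slope along each recession generator lies in $L$'' already forces $\mu_1,\ldots,\mu_k\in L$. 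Second, note that your argument, stripped down, \emph{is} the semilinear special case of Edmundo's theorem, so the two proofs differ mainly in whether that structure result is quoted or redone inline.
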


\noindent
Our proof is rather sketchy.

\begin{proof}
Observe that $\Sa V_{K,L}$ is a reduct of $\Sa S$.
We show that $\Sa S$ is a reduct of $\Sa V_{K,L}$.
Applying Fact~\ref{fact:ed} we obtain a collection $\Cal B$ of bounded $(\R,<,+,(t \mapsto \lambda t)_{\lambda \in \R})$-definable sets such that $(\R,<,+,\Cal B, (t \mapsto \lambda t)_{\lambda \in L})$ is interdefinable with $\Sa S$.
Rescaling and translating we may suppose that every $X \in \Cal B$ is a subset of some $[0,1)^n$.
A straightforward application of the semilinear cell decomposition \cite[Corollary 7.6]{Lou} shows that every $X \in \Cal B$ is definable in the expansion of $(\R,<,+)$ by all functions $[0,1) \to \R$, $t \mapsto 
\lambda t$ for $\lambda \in K$.
So $\Sa S$ is a reduct of $\Sa V_{K,L}$.
\end{proof}

\newpage
\noindent
We also need the following theorem of Hieronymi~\cite{discrete}.
Fact~\ref{fact:hier} is not exactly stated in \cite{discrete} but follows immediately from the main theorem of that paper.

\begin{fact}
\label{fact:hier}
Suppose $\Sa S$ is field-type and defines a discrete subset $D$ of $\R^n$ and a function $f : D \to \R$ such that $f(D)$ is somewhere dense.
Then $\Sa S$ defines all bounded Borel subsets of all $\R^n$.
\end{fact}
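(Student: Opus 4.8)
\noindent
The plan is to reduce to an expansion of the real field, where the main theorem of \cite{discrete} applies directly. Since $\Sa S$ is field-type, fix a bounded interval $I$ and $\Sa S$-definable operations $\oplus,\otimes$ on $I$ so that $\bb K := (I,<,\oplus,\otimes)$ is isomorphic to $(\R,<,+,\cdot)$. Let $\Sa S_I$ be the structure induced on $I$ by $\Sa S$. As the order topology of $\bb K$ agrees with the subspace topology on $I \subseteq \R$, an isomorphism $\bb K \to (\R,<,+,\cdot)$ is a homeomorphism, so $\Sa S_I$ is (isomorphic to) an expansion of the real field, and the topological Borel subsets of powers of $I$ are exactly its Borel sets. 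The goal is to realize the hypotheses of \cite{discrete} inside $\bb K$ and then transport the conclusion back to the ambient line.

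\noindent
First I would produce, inside $\bb K$, a discrete set together with a function of somewhere-dense image. The image side is routine: as $f(D)$ is somewhere dense it is dense in some bounded open interval $(a,b)$, and since $(\R,<,+)$ defines every map $t \mapsto qt + c$ with $q \in \Q$, one such map $\sigma$ carries $(a,b)$ into $I$; then $\sigma \circ f$ has image dense in an open subinterval of $I$, hence somewhere dense in $\bb K$. The delicate point is the domain: $D$ is infinite and discrete, hence unbounded, so it cannot be placed inside the bounded interval $I$ by an ambient affine map. \textbf{This is the main obstacle:} one must recode the ambient discrete set $D$ as a set internal to $\bb K$, that is, as a $\bb K$-discrete subset of some $I^m$ carrying the function across, using only the interaction between the ambient ordered group and the definable field that $\Sa S$ furnishes. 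I expect this recoding --- matching the unbounded ambient scale against the unbounded $\bb K$-scale compressed into $I$ --- to be where the real work lies, and to be supplied essentially by the methods of \cite{discrete} itself (this is presumably why the author records Fact~\ref{fact:hier} as following immediately from the main theorem rather than restating its proof).

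\noindent
Granting such internal data, the main theorem of \cite{discrete} applied to $\Sa S_I$ shows that every bounded Borel subset of every $\bb K^m$ is $\Sa S_I$-definable, hence $\Sa S$-definable. It remains to transport this to the ambient line. A $\bb K$-definable homeomorphism from $\bb K$ onto a bounded $\bb K$-interval upgrades ``bounded Borel'' to ``all Borel'' for subsets of powers of $I$. Now given a bounded Borel set $B \subseteq \R^k$, choose an ambient $\Q$-affine homeomorphism $\psi$ of $\R^k$ sending the box containing $B$ into $I^k$; then $\psi(B)$ is a Borel subset of $I^k$, so it is $\Sa S$-definable, and therefore its preimage $B = \psi^{-1}(\psi(B))$ is $\Sa S$-definable. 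Thus $\Sa S$ defines every bounded Borel subset of every $\R^k$.
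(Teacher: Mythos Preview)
The paper does not prove this fact: the sentence introducing it says only that it ``is not exactly stated in \cite{discrete} but follows immediately from the main theorem of that paper.'' So there is no argument in the paper to compare your proposal against.

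Your outline is a reasonable attempt to \emph{derive} the fact from a version of Hieronymi's theorem stated only for expansions of $(\R,<,+,\cdot)$, and the transport steps at the end (upgrading bounded $\bb K$-Borel to all Borel in $I^k$, then pulling back along a $\Q$-affine map) are fine. But as you yourself flag, the proposal is not a proof: the step you label the ``main obstacle'' --- recoding the unbounded ambient discrete set $D \subseteq \R^n$ as a discrete subset of some $I^m$ carrying a function with somewhere-dense image --- is simply not carried out. Writing that you ``expect this recoding \ldots\ to be supplied essentially by the methods of \cite{discrete} itself'' is a deferral, not an argument, and it sits uneasily with the author's claim that the deduction is \emph{immediate}: if one genuinely had to rerun the internal machinery of \cite{discrete} to bridge this gap, the deduction would not be immediate. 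The more plausible reading is that the main theorem of \cite{discrete} is already formulated (or reformulates with no effort) for expansions of $(\R,<,+)$ that define a real closed field on some interval, so that the hypotheses of Fact~\ref{fact:hier} feed in directly and no recoding of $D$ into a power of $I$ is needed. Either way, as written your proposal has a genuine gap precisely at the point you identify.
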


\noindent
Fact~\ref{fact:hb} is due to Hieronymi and Balderrama~\cite{BH-Cantor}.
It is a corollary to the fundamental Hieronymi-Tychonievich theorem~\cite{HT}.

\begin{fact}
\label{fact:hb}
Suppose $D,E$ are $\Sa S$-definable discrete subsets of $\R^n$, $f : D \to \R$ and $g : E \to \R$ are $\Sa S$-definable, $f(D)$ and $g(E)$ are both somewhere dense, and 
$$ [ f(D) - f(D) ] \cap [ g(E) - g(E) ] = \{0\}.$$
Then $\Sa S$ defines all bounded Borel subsets of all $\R^n$.
\end{fact}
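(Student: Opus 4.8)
The plan is to manufacture, out of the two given configurations, a single definable ``coordinate system'' on a dense subset of an interval, and then feed this into the Hieronymi--Tychonievich theorem of \cite{HT}, whose entire purpose is to turn exactly such a coordinatization into definability of all bounded Borel sets (indeed into an interpretation of the monadic second-order theory of $(\N,+1)$). So the statement should be read as a packaging of \cite{HT}, and my task is to verify its hypotheses from the ones we are handed.

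The only hypothesis doing real work is the difference condition, which I would read as a \emph{freeness} statement. Consider the definable map $\Phi \colon f(D) \times g(E) \to \R$ given by $\Phi(p,q) = p - q$. If $\Phi(p_1,q_1) = \Phi(p_2,q_2)$ then $p_1 - p_2 = q_1 - q_2$ lies in $[f(D)-f(D)] \cap [g(E)-g(E)] = \{0\}$, whence $p_1 = p_2$ and $q_1 = q_2$; thus $\Phi$ is injective. Since $D$ and $E$ are discrete, $f(D)$ and $g(E)$ are countable, so $\Phi$ is a definable bijection of a countable product onto the difference set $f(D) - g(E)$. Because $f(D)$ and $g(E)$ are somewhere dense, this difference set is dense in an interval, and each of its points carries a \emph{unique, definably recoverable} pair of coordinates in $f(D) \times g(E)$. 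After translating $f$ and $g$ by constants -- which alters neither difference set, hence preserves the independence condition -- I may further assume the two images are dense in a common interval; this and the usual continuity and monotonicity normalizations of $f,g$ are routine and I would dispatch them first.

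I would then hand this configuration to \cite{HT}: a definable dense set equipped with a definable way to read off two independent dense ``digit'' sequences is precisely the input from which the Cantor-style coding of \cite{BH-Cantor} is built, yielding that $\Sa S$ defines every bounded Borel subset of every $\R^n$. (Alternatively, one uses the coordinatization to define operations $\oplus,\otimes$ making $\Sa S$ field-type, and then quotes Fact~\ref{fact:hier} with either of the two discrete configurations, reducing to the already-stated case.) The genuinely hard step is this last one -- converting the combinatorial freeness of $\Phi$ into arithmetic -- and it is exactly the content of the fundamental theorem of \cite{HT}; the independence condition is engineered so that it goes through, since without unique decoding the two dense sets could interact coherently (for instance if $f(D)$ and $g(E)$ were cosets of a common dense subgroup, their difference sets would meet in far more than $\{0\}$) and no coding would be available.
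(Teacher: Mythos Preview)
The paper does not prove this fact at all: it is quoted as a result of Balderrama--Hieronymi \cite{BH-Cantor}, with the remark that it is a corollary of the fundamental Hieronymi--Tychonievich theorem \cite{HT}. Your proposal is precisely a sketch of that corollary---the injectivity of $\Phi(p,q)=p-q$ coming from the difference condition is exactly the ``unique decoding'' input that feeds into the machinery of \cite{HT}---so your approach is the one the paper points to; the only caveat is that your parenthetical alternative of first extracting field-type and then invoking Fact~\ref{fact:hier} is not obviously easier and would itself need the full strength of \cite{HT}.
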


\subsection{Pole}
We first consider the case when $\Sa R$ has a pole.
Applying Fact~\ref{fact:ed} fix $\Sa R$-definable $\oplus,\otimes : \R^2 \to \R$ such that $(\R,<,\oplus,\otimes)$ is isomorphic to $(\R,<,+,\cdot)$.
Note that there is a unique isomorphism $\iota : (\R,<,\oplus,\otimes) \to (\R,<,+,\cdot)$.
Let $\Sa R'$ be the pushforward of $\Sa R$ by $\iota$, $\boxplus : \R^2 \to \R$ be the pushforward of $+$ by $\iota$, and $Z := \iota(\Z)$.
Then $\Sa R'$ is an o-minimal expansion of $(\R,<,+,\cdot)$ and $\iota$ gives an isomorphism $(\Sa R,\Z) \to (\Sa R',Z)$.
We now recall the \textit{o-minimal two group question}, see \cite{two-group}.

\begin{qst}
\label{qst:two}
Suppose $\Sa S$ is an o-minimal expansion of $(\R,<,+,\cdot)$ and $\odot : \R^2 \to \R$ is $\Sa S$-definable such that $(\R,<,\odot)$ is an ordered abelian group.
Must there be either an $\Sa S$-definable isomorphism $(\R,<,\odot) \to (\R,<,+)$ or $(\R,<,\odot) \to (\R_{>0},<,\cdot)$?
\end{qst}

\noindent
The o-minimal two group question is open.
However, it is shown in \cite{two-group} that if the Pfaffian closure of $\Sa S$ is exponentially bounded then the two group question has a positive answer for $\Sa S$.
Recall that an expansion of $(\R,<,+)$ is exponentially bounded if every definable function $\R \to \R$ is eventually bounded above by a compositional iterate of the exponential, the Pfaffian closure of an o-minimal expansion is o-minimal, and every known o-minimal expansion of $(\R,<,+,\cdot)$ is exponentially bounded.
So we assume that the two group question has a positive answer over $\Sa R'$.
\newline

\noindent
Suppose $\tau : (\R,<,\boxplus) \to (\R,<,+)$ is an $\Sa R'$-definable isomorphism.
Then $\tau \circ \iota$ is an isomorphism $(\R,<,+) \to (\R,<,+)$ so there is $\lambda > 0$ such that $(\tau \circ \iota)(t) = \lambda t$ for all $t$.
So $\tau(Z) = \lambda \Z$.
It follows that $(\Sa R',Z)$ defines $\Z$ and is therefore totally wild, hence $(\Sa R,\Z)$ is totally wild.
\newline

\noindent
Suppose $\tau : (\R,<,\boxplus) \to (\R_{>0},<,\cdot)$ is an $\Sa R'$-definable isomorphism.
Then $\tau \circ 
\iota$ is an isomorphism $(\R,<,+) \to (\R_{>0},<,\cdot)$ so there is $\lambda > 0$ such that $(\tau \circ \iota)(t) = \lambda^t$ for all $t.$
Then $\tau(Z) = \lambda^{\Z}$ and $(\Sa R' Z)$ is interdefinable with $(\Sa R', \lambda^\Z)$.
We say that $\Sa R'$ has \textit{rational exponents} if the function $\R_{>0} \to \R_{>0}$ given by $t \mapsto t^r$ is only $\Sa R'$-definable when $r \in \Q$.
If $r \in \R \setminus \Q$ then $\{ ab^r : a,b \in \lambda^\Z\}$ is dense in $\R_{>0}$.
So it follows from Fact~\ref{fact:hier} that if $\Sa R'$ has irrational exponents then $(\Sa R',\lambda^\Z)$ defines all Borel subsets of all $\R^n$, so in this case $(\Sa R,\Z)$ is totally wild.
Suppose $\Sa R'$ has rational exponents.
Generalizing \cite{vdd-Powers2} Miller and Speissegger have shown that in this case $(\Sa R',\lambda^\Z)$ admits quantifier elimination in a natural expanded language~\cite[Section 8.6]{Miller-tame}.
It follows that $(\Sa R',\lambda^\Z)$ is $\nip$ (see \cite{GH-Dependent}) and that $(\Sa R',\lambda^\Z)$-definable sets are geometrically tame (see \cite{Miller-tame,Tychon-thesis}).
So in this case $(\Sa R,\Z)$ is tame.

\subsection{No pole}
We now assume that $\Sa R$ has no poles.
If $\Sa R$ has rational global scalars then $(\Sa R,\Z)$ is locally o-minimal, we have a good description of $(\Sa R,\Z)$-definable sets by Proposition~\ref{prop:local-1}, and $(\Sa R,\Z)$ is strongly dependent (see \cite{big-nip}).
\newline

\noindent
Suppose $\Sa R$ has irrational global scalars and is field type.
Fix an irrational element $\lambda$ of the field of global scalars of $\Sa R$.
Then $\Z + \lambda\Z$ is dense in $\R$.
Fact~\ref{fact:hier} shows that $(\Sa R,\Z)$ defines all bounded Borel sets and is therefore totally wild.
\newline

\noindent
Suppose $\Sa R$ has irrational scalars and is not field type.
Applying Fact~\ref{fact:ps-tri} and Proposition~\ref{prop:classify} we may suppose that $\Sa R = \Sa V_{K,L}$ for subfields $K,L \neq \Q$.
If $L$ is not quadratic then it follows from a theorem of Hieronymi and Tychonievich~\cite[Theorem B]{HT} that $(\Sa V_L,\Z)$ defines all Borel sets.
\newline

\noindent
Suppose $L = \Q(\alpha)$ for a quadratic irrational $\alpha$.
Suppose $K$ is not $\Q(\alpha)$.
(This case is a slight extension of \cite{HT}.)
Fix positive $\beta \in K \setminus \Q(\alpha)$.
Let $f : \Z^2 \to \R$ be given by $f(k,k') = k + \alpha k'$.
Then $f$ is $(\Sa V_{K,L},\Z)$-definable and $f(\Z^2)$ is dense in $\R$.
Let $E$ be the set of $(k,k') \in \Z^2$ such that $0 \leq f(k,k') < 1$.
Let $g : E \to \R$ be given by $g(k,k') = \beta k + \beta\alpha k'$.
Then $E$ and $g$ are $(\Sa V_{K,L},\Z)$-definable and $g(E)$ is dense in $[0,\beta)$.
Observe that $f(\Z^2) - f(\Z^2) = f(\Z^2) = \Z + \alpha\Z$ and $g(E) - g(E) \subseteq \beta\Z + \beta\alpha\Z$.
As $\beta$ is not in $\Q(\alpha)$ elementary algebra yields $(\Z + \alpha\Z) \cap (\beta\Z + \beta\alpha\Z) = \{0\}$.
An application of Fact~\ref{fact:hb} shows that $(\Sa V_{K,L},\Z)$ defines all bounded Borel sets.
\newline

\noindent
One case remains, when $K = L$ is quadratic.
In this case something remarkable happens.
Consider $\Sa V_{K}$ for a quadratic subfield $K$ of $(\R,<,+,\cdot)$.
Let $\Sa B$ be the standard model $(\Cal P(\N),\N,\in,+1)$ of the monadic second order theory of one successor, i.e. we have a sort $\Cal P(\N)$ for the power set of $\N$, a sort for $\N$, the membership relationship $\in$ between these two sorts, and the successor function $+1$ on $\N$.
It is a theorem of B\"{u}chi~\cite{Buchi} that $\Sa B$ is decidable.
By \cite[Theorem D]{H-Twosubgroups} or \cite[Theorem C]{HW-Monadic} $(\Sa V_K,\Z)$ defines an isomorphic copy of $\Sa B$.
Hieronymi has shown that $\Sa B$ defines an isomorphic copy of $(\Sa V_K,\Z)$, see \cite[Theorem D]{H-multiplication} and \cite[Theorem C]{H-Twosubgroups}.
\newline

\noindent
If an expansion of $(\R,<,+)$ defines all bounded Borel sets then it defines an isomorphic copy of $(\R,<,+,\cdot,\Z)$.
It is also easy to see that $(\Sa R,\Z)$ is a reduct of $(\R,<,+,\cdot,\Z)$ for any o-minimal $\Sa R$.
So we see that if $\Sa R$ is a \textit{known} o-minimal expansion of $(\R,<,+)$ then $(\Sa R,\Z)$ is, up to mutual interpretation, one of the following:

\begin{enumerate}
\item $(\Sa R,\Z)$ where $\Sa R$ has no poles and rational scalars,
\item $(\Sa S,\lambda^\Z)$, $\Sa S$ an o-minimal expansion of $(\R,<,+,\cdot)$ with rational exponents,
\item $(\Cal P(\N), \N, \in, +1)$,
\item $(\R,<,+,\cdot,\Z)$.
\end{enumerate}
 

\bibliographystyle{abbrv}
\bibliography{NIP}
\end{document}